\numberwithin{equation}{section}
\newtheorem{theorem}{Theorem}[section]
\newtheorem{corollary}[theorem]{Corollary}
\newtheorem{lemma}[theorem]{Lemma}
\newtheorem{proposition}[theorem]{Proposition}
\newtheorem{problem}[theorem]{Problem}
\theoremstyle{definition}
\theoremstyle{remark}
\newtheorem{remark}[theorem]{Remark}
\newtheorem{example}[theorem]{Example}
\newcommand{\C}{\mathbb{C}}
\newcommand{\N}{\mathbb{N}}
\newcommand{\bn}{\mathbb{N}}
\newcommand{\R}{\mathbb{R}}
\newcommand{\br}{\mathbb{R}}
\newcommand{\Z}{\mathbb{Z}}
\newcommand{\bz}{\mathbb{Z}}
\newcommand{\Q}{\mathbb{Q}}
\newcommand{\bq}{\mathbb{Q}}
\newcommand{\ch}{\mathbf 1}
\newcommand{\supp}{\operatorname{supp}}
\title{Are MSF Wavelets Minimally Supported?}
\author{Marcin Bownik}
\address{Department of Mathematics,
University of Oregon,
Eugene, OR 97403--1222, USA}
\email{mbownik@uoregon.edu}
\author{Ziemowit Rzeszotnik}
\address{Mathematical Institute, University of Wroc\l aw, 50--384 Wroc\l aw, Poland}
\email{zioma@math.uni.wroc.pl}
\author{Darrin Speegle}
\address{Department of Mathematics and Statistics, Saint Louis University, St. Louis, MO 63103, USA}
\email{speegled@slu.edu}
\thanks{The first author was partially supported by NSF grant DMS-2349756.}
\subjclass[2020]{Primary: 42C40}
\keywords{wavelet set, tilings}
\begin{document}

\begin{abstract}
Larson's problem \cite[Problem 3]{Lar} asks ``Must the support of the Fourier transform of a wavelet contain a wavelet set?". We give an affirmative answer to a non-measurable variant of this question by proving that the Fourier transform of a wavelet must contain a possibly non-measurable wavelet set. We also provide background results on Larson's problem and propose two new related problems.
\end{abstract}
\maketitle

\section{Introduction}

In the early 1990's, groups led by David Larson and Guido Weiss were studying the mathematical underpinnings of wavelets. As would be expected, the two groups came up with different terms for objects that were of fundamental interest in their investigations. One such example is a wavelet with the property that 
its Fourier transform (i.e. frequency) is the indicator function of a set. Larson focused on the frequency support of the wavelets, and called the frequency support of such wavelets \textit {wavelet sets \cite{DaiLar98, DaiLarSpe97}.} Weiss focused on wavelets whose modulus of the frequency is the indicator function of a set, and called such wavelets {\textit{mimimally supported  frequency} (MSF) wavelets \cite{HerWei96, WeiWil01}.} The reasoning behind Weiss' nomenclature is that minimally supported wavelets have support with the smallest possible Lebesgue measure.\footnote{See the next section for proofs of facts in the introduction, as well as more definitions.} In the middle of 1990's, Larson asked whether minimally supported wavelets are also minimally supported with respect to set inclusion \cite{Lar}. That is, if $\psi$ is a wavelet whose Fourier transform has support $E$, and $\psi$ is not an MSF wavelet, is there a set $F \subset E$ with the Lebesgue measure of $E\setminus F$ positive and such that $F$ is the support of the Fourier transform of a wavelet? The most natural way to prove that MSF wavelets are minimally supported with respect to set inclusion would be to show that the support of the Fourier transform of a wavelet must contain a wavelet set.
This was formalized as an open problem in \cite{Lar}.

\begin{problem}\label{mainprob}
Must the support of the Fourier transform of a wavelet contain a wavelet set?
\end{problem}

The motivation for this problem (in addition to the nomenclature presented above) is two-fold. First, Larson had a program for using operator theory to build more complicated wavelets from simpler wavelets. In particular, Larson, together with Dai, had an operator theoretic approach to constructing the Lemarie-Meyer wavelets from wavelet sets \cite{DaiLar98}. Their construction is similar in spirit to the construction presented in \cite{HerWei96}. Larson and Dai also proved that  operator theoretic techniques could be used on more complicated families of wavelet sets to build more general wavelets. A natural question is whether {\it{every}} wavelet can be built using these techniques. However, since the techniques always produce a wavelet whose frequency support is contained in the union of the underlying wavelet sets and contains a wavelet set, a necessary condition to be able to construct every wavelet this way would be a positive solution to Problem \ref{mainprob}.

The second motivation came later, and is also more tenuous. It is easy to see that if $\pi$ is a rotation on the plane, then there exists an $S = S_\pi\subset \R^2$ such that $S$ tiles $\R^2$ via translations along $\pi (\Z^2).$ The Steinhaus tiling problem asks whether there exists a set $S \subset \R^2$ such that for every rotation $\pi$, $S$ tiles $\R^2$ when translated by the elements of $\pi (\Z^2)$. The Steinhaus problem was solved in \cite{JacMau02, JM}, where a non-measurable set $S$ was constructed. It remains an open problem whether a measurable Steinhaus set in $\R^2$ can be constructed. For a higher dimensional variant of Steinhaus problem, see \cite{GoldMau}.

Similarly, we will see below in Proposition \ref{transanddiltileprop} that if $E$ is the support of a wavelet, $E$ must contain a subset which tiles by translations along $\Z$ and a different subset which tiles by dilations of powers of 2. Problem \ref{mainprob} asks whether there is a single set which does these simultaneously. In the Steinhaus tiling problem and Problem \ref{mainprob}, we are asking to find a single set which tiles via multiple actions. The differences are many: the Steinhaus problem has uncountably many tilings, while we only have two, the Steinhaus problem only considers translational tilings while our tilings are more diverse, and our problem considers the relationship between functional tilings and set tilings. 

It is worth mentioning two other similar works on simultaneous tilings. Han and Wang \cite{HanWang} showed that simultaneous translational tiling by two lattices of the same volume always exists. Kolountzakis and Papageorgiou \cite{KolPa} studied simultaneous functional tilings with respect to a collection of lattices.

The main result in this paper shows that a possibly non-measurable wavelet set exists in the support of the Fourier transform of any wavelet. The proof relies on a result of Isbell \cite{Isb55, Isb62} on the existence of a positive diagonal of a doubly stochastic matrix. The original Larson's problem, whether such a set can be chosen to be measurable, remains open.

\section{Notation and Background}

A {\it (dyadic, orthonormal) wavelet} is a function $\psi \in L^2(\R)$ such that 
\[
\{2^{j/2} \psi(2^jx + k): j, k \in \Z\}
\]
is an orthonormal basis for $L^2(\R)$. In this paper, we will be concerned with the Fourier transforms of wavelets. The Fourier transform we use is given for functions $f\in L^1(\R) \cap L^2(\R)$ by
\[
\hat f(\xi) = \int_{\R} f(x) e^{-2\pi i \xi x}\, dx.
\]

If $\psi$ is a wavelet, then the following four equations are satisfied \cite{HerWei96}:
\begin{align}
    \sum_{k \in \Z} \abs{\hat \psi(\xi + k)}^2 &= 1 \qquad\text{for a.e. }\xi\in\R, \label{eq:1} \\
    \sum_{j \in \Z} \abs{\hat \psi (2^j\xi)}^2 &= 1 \qquad\text{for a.e. }\xi\in\R, \label{eq:2}\\
    t_q(\xi) := \sum_{j = 0}^\infty \hat\psi(2^j\xi) \overline{\hat\psi(2^j(\xi + q)} &= 0 \qquad\text{for a.e. }\xi\in\R\text{ for any }q\in 2\Z+1, \label{eq:3}  \\
     \sum_{k \in \Z} \hat \psi(2^j(\xi + k)) \overline{\hat \psi (\xi + k)} &= 0 \qquad\text{for a.e. }\xi\in\R\text{ and for any } j \ge 1. \label{eq:4}
\end{align}
Moreover, if equations \eqref{eq:1} and \eqref{eq:2} hold, then either one of conditions \eqref{eq:3} and \eqref{eq:4}  implies that $\psi$ is a wavelet, see \cite{Bow01, HerWei96}.

We note here the similarity between \eqref{eq:1}, \eqref{eq:2}, and geometric notions of tilings. In the special case that $\hat \psi = \ch_E$, \eqref{eq:1} holds exactly when $\{E + k: k\in \Z\}$ is a measurable tiling of $\R$ in the sense that 
\begin{equation}
\label{tt}
\begin{aligned}
    m\bigg( \R \setminus \bigcup_{k \in \Z} \left(E + k\right) \bigg) &= 0 \qquad\text{and}\\
    m\left((E + k) \cap (E + k^\prime)\right) &= 0 \qquad\text{for all } k\not= k^\prime \in \Z,
\end{aligned}
\end{equation}
where $m$ is the Lebesgue measure on $\R$. Similarly, \eqref{eq:2} holds exactly when $\{2^j E: j\in \Z\}$ is a measurable tiling of $\R$,
\begin{equation}
\label{dt}
\begin{aligned}
    m\bigg( \R \setminus \bigcup_{j \in \Z} 2^jE \bigg)& = 0 \qquad\text{and}\\
    m\left((2^jE) \cap (2^{j'}E)\right) &= 0 \qquad\text{for all } j\not= j^\prime \in \Z,
\end{aligned}
\end{equation}
A set $E \subset \R$, which satisfies both \eqref{tt} and \eqref{dt}, is called {\it a wavelet set}.
It can be shown that \eqref{eq:3} and \eqref{eq:4} are superfluous whenever $\hat \psi = \ch_E$, and we have the following theorem, see  \cite[Corollary 2.4 in Chapter 7]{HerWei96} for details.

\begin{theorem}\label{msfchar}
    Let $\psi$ be a function in $L^2(\R)$ such that $|\hat \psi| = \ch_E$. The function $\psi$ is a wavelet if and only if $E$ is a wavelet set. 
    
    In particular, if $W$ is a wavelet set and $m: W \to \C$ is a unimodular function, then $\psi$ given by $\hat\psi(\xi)=m(\xi) \ch_W(\xi)$ is an MSF wavelet. 
\end{theorem}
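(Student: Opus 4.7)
The plan is to reduce the claim to the four-equation characterization of wavelets already recorded in the excerpt. Specifically, I would show that when $|\hat\psi| = \ch_E$, condition \eqref{eq:1} is equivalent to the translational tiling \eqref{tt}, condition \eqref{eq:2} is equivalent to the dyadic-dilation tiling \eqref{dt}, and condition \eqref{eq:3} is automatic. The cited criterion then immediately gives both implications.

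First, since $|\hat\psi(\xi)|^2 = \ch_E(\xi)$ a.e., equation \eqref{eq:1} reads $\sum_{k\in\Z}\ch_E(\xi+k)=1$ a.e., which is precisely \eqref{tt}; analogously, \eqref{eq:2} becomes $\sum_{j\in\Z}\ch_E(2^j\xi)=1$ a.e., which is \eqref{dt}. Thus, under the hypothesis $|\hat\psi|=\ch_E$, conditions \eqref{eq:1}--\eqref{eq:2} hold if and only if $E$ is a wavelet set.

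The key step is that \eqref{eq:3} is then automatic. Write $\hat\psi = m\,\ch_E$ with $|m|=1$ on $E$. For $q \in 2\Z+1$ and $j\ge 0$, the number $2^j q$ is a nonzero integer. The $j$-th summand of $t_q(\xi)$ is
\[
m(2^j\xi)\,\overline{m(2^j(\xi+q))}\,\ch_E(2^j\xi)\,\ch_E(2^j\xi+2^j q),
\]
whose modulus is at most $\ch_{E\cap(E-2^jq)}(2^j\xi)$. By \eqref{tt}, $E\cap(E-2^jq)$ has measure zero, so after pulling back by the dilation $y\mapsto 2^j y$ the $j$-th summand vanishes off a null set $N_j$. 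Since $\bigcup_{j\ge 0}N_j$ is still null, $t_q\equiv 0$ a.e., giving \eqref{eq:3}. Hence, when $E$ is a wavelet set, all of \eqref{eq:1}, \eqref{eq:2}, \eqref{eq:3} hold, and the cited criterion yields that $\psi$ is a wavelet. Conversely, if $\psi$ is a wavelet, then \eqref{eq:1} and \eqref{eq:2} hold, which by the equivalence above means $E$ satisfies \eqref{tt} and \eqref{dt}, i.e., $E$ is a wavelet set.

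The ``in particular'' clause is then immediate: for a unimodular $m\colon W\to\C$, the function $\hat\psi = m\,\ch_W$ satisfies $|\hat\psi|=\ch_W$, so by the equivalence just established $\psi$ is a wavelet, and it is MSF by definition. The only point requiring care in the whole argument is tracking the countable union of null sets when deriving \eqref{eq:3}; there is no substantive obstacle.
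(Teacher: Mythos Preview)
Your argument is correct and follows exactly the approach the paper itself indicates: the paper does not spell out a proof but states that ``\eqref{eq:3} is superfluous whenever $\hat\psi=\ch_E$'' and refers to \cite[Corollary~2.4, Chapter~7]{HerWei96} for details. Your verification that each summand of $t_q$ is supported on a null set via the translational packing, together with the reduction of \eqref{eq:1}--\eqref{eq:2} to \eqref{tt}--\eqref{dt}, is precisely the standard argument behind that citation.
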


A wavelet $\psi$ as in Theorem \ref{msfchar} is called {\textit{mimimally supported  frequency}} (MSF). An MSF wavelet $\psi$ has the smallest possible Lebesgue measure of the frequency support $\supp \hat\psi$, which is equal to $1$. 
Here and throughout the paper, the \textit{support} of a measurable function $f:\R \to \C$ is defined up to a set of measure zero as 
\[
\supp f =\{x \in \R: f(x) \not= 0\}.
\]
The support of an MSF wavelet is a wavelet set. Consequently, Theorem \ref{msfchar} provides a correspondence between wavelet sets and MSF wavelets.

\begin{example}
    The following are three basic examples of wavelet sets. 
    \begin{enumerate}
        \item (Shannon wavelet set) $[-1, -1/2] \cup [1/2,1]$,
        \item $[-2/3, -1/3] \cup [2/3, 4/3]$, and
        \item (Journ\'e wavelet set) $[-16/7, -2] \cup [-1/2, -2/7] \cup [2/7, 1/2] \cup [2, 16/7]$.
    \end{enumerate}
\end{example}

Returning to interpreting equations \eqref{eq:1} and \eqref{eq:2} in terms of tiling, a function $f:\R \to \C$ is said to tile $\R$ by translations if 
\[ \sum _{k \in \Z} f(\xi + k) = 1 \qquad\text{for a.e. }\xi\in\R,
\]
with the convergence being absolute for almost every $\xi$. Similarly, we say that $f$ tiles $\R$ by dilations if
\[ \sum _{j \in \Z} f(2^j\xi) = 1 \qquad\text{for a.e. }\xi\in\R.
\]
Equations \eqref{eq:1} and \eqref{eq:2} state that $\abs{\hat \psi}^2$ tiles $\R$ by translations and by dilations. In light of Problem \ref{mainprob}, it is natural to ask the following question, a positive answer to which would imply a positive answer to Problem \ref{mainprob}. 

\begin{problem}\label{stronglarson}
    If $f$ is a measurable function which tiles by translations and dilations, is there a measurable set $W$ contained in the support of $f$ which tiles by translations and dilations? 
\end{problem}

It may not be immediately clear how to leverage equations \eqref{eq:3} and \eqref{eq:4} to make Problem \ref{mainprob} easier. However, note the following consequences of \eqref{eq:3} and \eqref{eq:4}, which provide additional geometric information about the support of the Fourier transform of a wavelet. 

\begin{proposition}\label{prop:geomsupport}
  Let $\psi$ be a wavelet whose Fourier transform is supported on $E$.
    \begin{enumerate}
        \item If $\xi \in E$ and $\xi + k \in E$ for an integer $k \not= 0$, then there exists $0\not= j \in \Z$ such that $2^j\xi \in E$ and $2^j(\xi + k) \in E$, where $2^j k \in \Z$.
        \item If $\xi \in E$ and $2^j \xi \in E$ for an integer $j\not= 0$, then there exists $0\not=k\in \Z$ such that $\xi + k \in E$ and $2^j(\xi + k) \in E$, where $2^jk \in \Z$.
    \end{enumerate}
\end{proposition}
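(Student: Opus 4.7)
The plan is to extract each conclusion directly from the orthogonality relations \eqref{eq:3} and \eqref{eq:4}: the hypothesis produces a single nonzero summand in a series that must vanish, and this forces another nonzero summand whose indices supply the required $j$ or $k$.

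For part (1), I would factor the nonzero integer $k$ as $k = 2^s q$ with $q$ odd and $s \geq 0$, set $\eta := 2^{-s}\xi$, and observe that the hypothesis $\xi, \xi+k \in E$ is exactly the assertion that the $j = s$ term of
$$t_q(\eta) = \sum_{j=0}^{\infty} \hat\psi(2^j\eta)\,\overline{\hat\psi(2^j(\eta+q))}$$
is nonzero, since $2^s\eta = \xi$ and $2^s(\eta+q) = \xi + 2^s q = \xi+k$. Equation \eqref{eq:3} applied to the odd integer $q$ forces $t_q(\eta) = 0$, so some other index $j' \geq 0$, $j' \neq s$, must contribute a nonzero term, meaning $2^{j'}\eta \in E$ and $2^{j'}(\eta+q) \in E$. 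Taking $j := j' - s \in \Z \setminus \{0\}$ then yields $2^j\xi = 2^{j'}\eta \in E$, $2^j(\xi+k) = 2^{j'}(\eta+q) \in E$, and $2^j k = 2^{j'} q \in \Z$.

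For part (2), I would first reduce to $j \geq 1$: if $j \leq -1$, apply the $-j \geq 1$ case with $2^j\xi$ in place of $\xi$ and convert the resulting shift $k'$ back via $k := 2^{-j}k' \in \Z$, noting that $2^j k = k' \in \Z$. For $j \geq 1$, equation \eqref{eq:4} reads
$$\sum_{k \in \Z} \hat\psi(2^j(\xi+k))\,\overline{\hat\psi(\xi+k)} = 0,$$
and the $k=0$ summand is nonzero by hypothesis, so some $k \in \Z \setminus \{0\}$ must also contribute a nonzero term, yielding $\xi + k \in E$, $2^j(\xi+k) \in E$, and $2^j k \in \Z$ (automatic for $j \geq 1$).

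The only subtlety is that \eqref{eq:3} and \eqref{eq:4} hold only almost everywhere, so both statements must be interpreted for a.e.\ $\xi$ satisfying the respective hypothesis; since we exclude only a countable union of null sets (one for each admissible pair $(s,q)$ in part (1) and for each $j \geq 1$ in part (2)), this is harmless, and no substantive obstacle is anticipated beyond the bookkeeping of dyadic exponents.
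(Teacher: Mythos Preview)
Your proof is correct and follows essentially the same approach as the paper: factor $k=2^s q$ with $q$ odd and apply \eqref{eq:3} at $2^{-s}\xi$ for part (1), and for part (2) use \eqref{eq:4} directly when $j\ge 1$ and reduce the case $j<0$ to it by replacing $\xi$ with $2^j\xi$. Your treatment of the almost-everywhere caveat is also in line with the paper's handling.
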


\begin{proof}
    For (1), write $k = 2^J q$, where $J \ge 0$ and $q$ is odd. Applying \eqref{eq:3} to the point $\xi^\prime = 2^{-J} \xi$, we see that 
    \[
    \hat\psi(2^J \xi^\prime) \overline{\hat\psi(2^J(\xi^\prime + q))} = \hat\psi(\xi) \overline{\hat\psi(\xi + k)}
    \]
    is one of the terms in the sum of $t_{q}$. Since $t_{q} = 0$ almost everywhere, for almost all $\xi \in E \cap (E - k)$, there is a $0\le j \not= J$ such that 
    \[
    \hat\psi(2^j \xi^\prime)  \overline{\hat\psi(2^j(\xi^\prime + q))}
    =
     \hat\psi(2^{j-J} \xi)  \overline{\hat\psi(2^{j-J}(\xi + k))}
    \not= 0.
    \]
    By definition of $q$, we also can conclude that $2^{j-J} k \in \Z$. This proves (1). 
    
    Part (2) follows similarly by using \eqref{eq:4} instead. Indeed, suppose first that $j\ge 1$. Since we have a non-zero term in the sum \eqref{eq:4} corresponding to $k=0$, there exists another non-zero term corresponding to some $k\ne 0$. Likewise, if $j<0$, then applying \eqref{eq:4} with $\xi$ replaced by $\xi'=2^j\xi$ and $j$ replaced by $j'=-j$ yields the required conclusion.
\end{proof}

In order to construct an example that illustrates the additional geometric condition that \eqref{eq:3} imposes, we will need some tools. To simplify statements, we will say that a set $E$ \textit{packs} by translations if $(E + k) \cap (E + k^\prime)$ has measure zero whenever $k \not= k^\prime$ are integers. 

\begin{theorem}\cite{IP98}\label{ionascupearcy}
  Let $F$ be a measurable subset of $\R$. Then, $F$ is a subset of a wavelet set if and only if
  \begin{enumerate}
      \item $F$ packs by translations,
      \item $F$ packs by dilations,
      \item there exists a measurable set $U \supset F$ such that $U$ packs by translations and $U$ tiles by dilations, and 
      \item there exists a measurable set $V \supset F$ such that $V$ packs by dilations and $V$ tiles by translations. 
  \end{enumerate}
  Moreover, if the four conditions are met, then the wavelet set can be chosen to be a subset of $U \cup V$.
\end{theorem}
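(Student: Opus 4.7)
The forward direction is immediate. If $F \subseteq W$ with $W$ a wavelet set, then $W$ itself tiles (hence packs) by translations and by dilations, so $F$ inherits the packing properties (1) and (2). Moreover the choice $U = V = W$ verifies (3), (4), and trivially $W \subseteq U \cup V = W$, which also gives the moreover-clause.

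For the converse, the plan is to build a wavelet set with $F \subseteq W \subseteq U \cup V$ by a measurable iterative completion, starting from $W_0 = F$. The key observation is that conditions (3) and (4) give well-defined a.e.\ selection maps: by the dilation tiling of $U$, each $\xi \in \R$ has a unique $j(\xi) \in \Z$ with $\pi_U(\xi) := 2^{-j(\xi)}\xi \in U$; by the translation tiling of $V$, each $\xi$ has a unique $k(\xi) \in \Z$ with $\pi_V(\xi) := \xi - k(\xi) \in V$. A measurable set $W$ is a wavelet set precisely when the analogous maps $\pi_W^{\mathrm{dil}}$ and $\pi_W^{\mathrm{tr}}$ are a.e.\ defined, so the task reduces to assembling $W$ from pieces of $U$ and $V$ in such a way that both selection maps become total.

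The construction iteratively fixes, on larger and larger annuli $A_n = \{\xi : 2^{-n} \le |\xi| \le 2^n\}$, the translation and dilation classes not yet represented in $W_n$, adjoining the corresponding $V$-representative for missing translation classes and the $U$-representative for missing dilation classes. The packing of $F$, $U$, $V$ in the appropriate directions ensures that the adjoined points stay inside $U \cup V$ and do not collide with existing pieces of $W_n$ under the action that originally selected them. The main obstacle is the interplay between the two actions: adjoining a $V$-representative to cover a missing translation class may introduce a conflict with some dilation class already represented in $W_n$, forcing a compensating re-selection from $U$. Handling this requires a careful priority ordering (covering annuli in a fixed order, and inside each annulus alternating between the two kinds of corrections) together with the full strength of conditions (1)--(4), which are precisely what rules out irreconcilable conflicts. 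Once convergence is established, the limit $W = \bigcup_n W_n$ is measurable, satisfies $F \subseteq W \subseteq U \cup V$, and tiles in both senses, hence is the desired wavelet set.
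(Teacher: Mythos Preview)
The paper does not prove this theorem; it is quoted from \cite{IP98} and used as a black box (for instance, in the proof of Corollary~\ref{speeglesubset}). So there is no in-paper proof to compare against, and your task is really to supply a self-contained argument.

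Your forward direction is fine. For the converse, the overall shape---alternately patching missing translation classes from $V$ and missing dilation classes from $U$---is indeed the standard strategy, and you correctly name the main obstacle: a newly adjoined $V$-representative may collide dilationally with something already in $W_n$. But your proposal then hides the entire content of the proof in the phrase ``once convergence is established,'' without giving any mechanism for convergence. The annulus exhaustion $A_n=\{2^{-n}\le|\xi|\le 2^n\}$ does not help here: translation orbits $\xi+\Z$ are unbounded and meet every $A_n$, so restricting to an annulus in no way localizes the translation-covering problem, and the ``priority ordering'' you invoke has no evident effect.

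What actually makes the iteration work is more structural. Since $F\subset U$ and $U$ packs by translations, nothing in $U\setminus F$ is translation-equivalent to $F$; since $F\subset V$ and $V$ packs by dilations, nothing in $V\setminus F$ is dilation-equivalent to $F$. This keeps $F$ permanently safe: it is never removed in any correction step. One then sets up the back-and-forth so that the measure of the ``uncovered'' portion (measured in a fixed fundamental domain for dilation, say $[-1,-\tfrac12]\cup[\tfrac12,1]$) is controlled from one stage to the next, and the limit set $W=\bigcup_n W_n$ is the desired wavelet set inside $U\cup V$. Your write-up needs to make this quantitative step explicit; as it stands, the argument stops exactly where the real work begins.
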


Note that Theorem \ref{ionascupearcy} implies the following.
\begin{remark}
   Larson's problem has a positive solution if for every wavelet $\psi$, there are measurable sets $U, V \subset \supp\hat \psi$ such that $U$ packs by translations and tiles by dilations, and $V$ packs by dilations and tiles by translations.
\end{remark}
    
Theorem \ref{ionascupearcy} also generalizes to translations along other lattices and dilations by invertible $n \times n$ matrices. It has been used to construct wavelet sets of various types, see for example \cite{BowSpe25, BowSpe02}. The following corollary has been implicitly used in constructions, but has never appeared as a stand-alone fact, so we provide a proof as well. 

\begin{corollary}\label{speeglesubset}
  Let $X$ be a finite collection of points such that
\begin{align*}
 X \cap \Z &= \emptyset,\\
 X \cap (X + k) &= \emptyset \,\, \text{whenever $0\not=k\in \Z$, and} \\
 X \cap 2^jX &= \emptyset \,\, \text{whenever $0\not=j\in \Z$,}
\end{align*}
then there exists $\delta > 0$ such that whenever $0 < \epsilon_x < \delta$ for all $x \in X$, the set 
\[
 \bigcup_{x \in X} (x - \epsilon_x, x + \epsilon_x)
\]
is contained in a wavelet set.
\end{corollary}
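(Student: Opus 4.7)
The plan is to apply Theorem \ref{ionascupearcy} to the set $F = \bigcup_{x \in X}(x - \epsilon_x, x + \epsilon_x)$.

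For conditions (1) and (2), I will choose $\delta$ small enough that $F$ packs by translations and by dilations. Since $X$ is finite and bounded away from $0$ and $\infty$, only finitely many $(y, k, j) \in X \times \Z \times \Z$ with $(y, k) \neq (x, 0)$ or $(y, j) \neq (x, 0)$ produce values of $|x - (y+k)|$ or $|x - 2^j y|$ below any fixed threshold. By the three hypotheses on $X$, all of these finitely many quantities are strictly positive, so $\delta$ smaller than half of their minimum ensures the required packing.

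For conditions (3) and (4) I will construct sets $U, V \supset F$ by modifying the Shannon wavelet set $W_S = [-1, -1/2) \cup [1/2, 1)$. For each $x \in X$, let $j_x, k_x \in \Z$ be the unique integers such that $2^{j_x} x \in W_S$ and $x - k_x \in W_S$, and set $J_x = 2^{j_x} I_x$ (the dilation shadow of $I_x$ in $W_S$) and $K_x = I_x - k_x$ (the translation shadow). My candidate for $U$, which must tile by dilations and pack by translations, is
\[
U = F \cup \bigl(W_S \setminus \textstyle\bigcup_x (J_x \cup K_x)\bigr) \cup R,
\]
where $R$ consists of dyadic relocations of the excised pieces $K_x \setminus \bigcup_y J_y$ into sufficiently high dyadic shells. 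The naive choice $R = \emptyset$ already makes the first two pieces tile by dilations (swapping dilation-equivalent pieces $J_x$ and $I_x$), but it typically fails to pack by translations whenever some $K_x$ is not already contained in $\bigcup_y J_y$, since $K_x + k_x = I_x \subset F$ would then produce a translation conflict. Excising each $K_x$ removes the conflict but breaks dilation tiling, and $R$ is the dilation-tiling compensation. The set $V$ verifying (4) is constructed symmetrically by interchanging the roles of translation and dilation.

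The main obstacle will be engineering $R$ so that no new translation conflicts are introduced. For each $\eta \in K_x \setminus \bigcup_y J_y$ one must select an exponent $m_\eta \in \Z \setminus \{0\}$ so that the translation residue $\{2^{m_\eta} \eta\}$ avoids the finitely many forbidden intervals (the residues of $F$ and those of earlier-placed pieces of $R$), and so that the various relocated pieces stay translationally disjoint from each other. For Lebesgue-almost every $\eta$ the orbit $\{\{2^m \eta\}\}_{m \ge 1}$ under the doubling map is equidistributed on $[0, 1)$ by ergodicity, so there are infinitely many $m$ sending $\eta$ into the allowed region; a measurable selection gives an $R$ of the required form, with the measure-zero set of dyadic-rational exceptions handled by further subdivision. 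Once $U$ and $V$ are in hand, Theorem \ref{ionascupearcy} produces a wavelet set contained in $U \cup V$ that contains $F$.
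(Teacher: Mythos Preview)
Your overall plan—verify conditions (1)–(4) of Theorem \ref{ionascupearcy}—matches the paper, and your treatment of (1) and (2) is fine. The problem is your construction of $U$ (and hence of $V$, which you claim is symmetric).

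Your set is $U = F \cup \bigl(W_S \setminus \bigcup_x (J_x \cup K_x)\bigr) \cup R$, and you want $U$ to pack by translations. But your list of ``forbidden residues'' for $R$ omits the retained piece $W_S \setminus \bigcup_x (J_x \cup K_x)$, whose residues fill all of $[0,1)$ except the residues of $\bigcup_x J_x$ and $\bigcup_x K_x$. Once that constraint is added, the allowed residue window for $R$ is exactly $A\setminus B$, where $A,B$ are the residues of $\bigcup_x J_x$, $\bigcup_x K_x$. Now take $X=\{5/4\}$: here $j_x=-1$, $k_x=2$, so $|J_x|=\epsilon$, $|K_x|=2\epsilon$, $J_x\cap K_x=\emptyset$, the piece $K'=K_x\setminus J_x$ you must relocate has measure $2\epsilon$, and $|A\setminus B|=\epsilon$. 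Your ergodicity argument only supplies exponents $m_\eta\ge 1$, so $R=\bigcup_{m\ge 1}2^m\{\eta:m_\eta=m\}$ has Lebesgue measure at least $2|K'|=4\epsilon$. A set of measure $4\epsilon$ cannot pack by translations with residues confined to a set of measure $\epsilon$. Thus the construction fails outright: equidistribution of individual orbits under the doubling map tells you nothing about whether the \emph{aggregate} relocated mass fits into the allowed window, and here it provably does not.

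The paper avoids all of this with a much simpler device. For $U$ it takes $U_1=\bigl([-2,-1]\cup[1,2]\bigr)\setminus\bigcup_j 2^jF$ and, after shrinking $\delta$ so that $\overline{\bigcup_k(F+k)}$ misses $\Z$, chooses a single large negative $J$ so that $2^JU_1$ lies in a tiny interval about $0$ disjoint from every integer translate of $F$; then $U=F\cup 2^JU_1$ packs by translations and still tiles by dilations. For $V$ it uses that $\R\setminus\bigcup_j2^jF$ is dilation-invariant, hence contains arbitrarily long intervals, so one integer translate of $[0,1]\setminus\bigcup_k(F+k)$ can be dropped inside it. No pointwise selection, no ergodicity, no relocation bookkeeping.
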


\begin{proof}
    From the conditions on $X$, we can choose $\delta > 0$ such that $F = \bigcup_{x \in X} B_\delta(x)$ packs by both translations and dilations. By reducing $\delta$ if necessary, we can also see that 
    \begin{equation}\label{empty}
            \overline{\bigcup_{k \in \Z} (F + k)} \cap \Z = \emptyset
    \end{equation}
    and 
    $\R \setminus \bigcup_{j \in \Z} 2^j F$ contains an interval.  

    Let 
    \[
    U_1 = \left([-2, -1] \cup [1,2]\right) \setminus \bigcup_{j \in \Z} 2^j F. 
    \]
    By \eqref{empty}, there exists a $J \in \Z$ such that $2^J U_1 \cap \overline{\bigcup_{k \in \Z} (F + k)} = \emptyset$. Therefore, $U = 2^J U_1 \cup F$ packs by translations and tiles by dilations.

    To see the existence of $V$, note that $V_1 = \R \setminus \bigcup_{j \in \Z} 2^j F$ satisfies $2 V_1 = V_1$. Therefore, $V_1$ contains an interval of length greater than 2, which packs by dilations. Let 
    \[
    V_2 = [0, 1] \setminus \bigcup_{k \in \Z} \left(F + k\right)
    \]
    and see that there exists a $k \in \Z$ such that $V_2 + k \subset V_1$. Therefore, $V = F \cup \left(V_2 + k\right)$ packs by dilations and tiles by translations. By Theorem \ref{ionascupearcy}, there exists a wavelet set $W$ which contains $F$ (and is contained in $U \cup V$). Note that if we replace $\delta$ by $0 < \epsilon_x \le \delta$, then $\cup_{x \in X} B_{\epsilon_x}(x) \subset F \subset W$, and the result is proven.
\end{proof}

Next, we will see a typical application of Corollary \ref{speeglesubset}, which we use to provide the promised example of the extra geometry that \eqref{eq:3} imposes on the support.

\begin{example}
    Let $X = \{1/5, 12/5, 34/5\}$. Then $X$ satisfies the conditions of Corollary \ref{speeglesubset}; let $\delta > 0$ be as in the conclusion. (The points in $X$ were chosen to satisfy $12/5 = 2(1/5) + 2$ and $34/5 = 2^2 (1/5) + 6$.)
    Let $G = (1/5 - \delta/4, 1/5 + \delta/4)$.
    Let
    \begin{align*}
        F &= G \cup 2(G + 1) \cup \left(4 G + 6 \right) \,\, \text{and} \\
        H &= G \cup(G + 1) \cup 4G \cup 2(G + 1) \cup (2(G + 1) + 1)\cup (4G + 6),
    \end{align*}
    reducing the size of $\delta$ if necessary to insure that the unions are disjoint.  
    
    It can be useful to imagine $H$ in matrix form, where sets in the same column tile the same subset of $\R$ via dilations, and sets in the same row tile the same subset of $\R$ via translations, as below:
    \begin{equation}\label{matrixrep}
    \begin{bmatrix}
    G&G + 1&\\
    &2(G + 1)&2(G + 1) + 1\\
    4G&&4G + 6
    \end{bmatrix}.
    \end{equation}

Let $W$ be a wavelet set containing $F$. 
Define $f$ via

    \[
    f(\xi) = \begin{cases}
        1&\xi \in W\setminus F,\\
        \frac 12&\xi \in H,\\
        0&\text{otherwise}.
    \end{cases}
    \]
    We see that $f$ tiles by translations and by dilations, and the support of $f$ is $H \cup W$. However, we note that for $\xi \in G \subset H \cup W$, 
    $\xi + 1 \in H \cup W$, but for no $j \in \Z \setminus \{0\}$ is both $2^j \xi$ and $2^j(\xi + 1)$ in $H \cup W$. Therefore, by Proposition \ref{prop:geomsupport} there is no wavelet whose Fourier transform has support equal to $H \cup W$.

    We can see, however, that the support of $f$ contains a wavelet set; indeed, the support of $f$ was constructed to contain $W$, which is a wavelet set.
\end{example}

Visualizing the support of $\hat \psi$ as a matrix as in \eqref{matrixrep} is a key to the results in this paper. It will be utilized extensively in the proof of Theorem \ref{dar}.
Two more results relating to Problem \ref{mainprob} are now presented.

First, let $\psi$ be a wavelet, and let $E$ be the support of $\hat \psi$. In \cite{RzeSpe02}, the second and the third authors showed that using \eqref{eq:3}, the organizational trick of \eqref{matrixrep}, and the additional assumptions that 
    \begin{align*}
        \sum_{k \in \Z} \ch_E(\xi + k) &\le 2
        \qquad\text{for a.e. }\xi\in\R,\\
        \sum_{j \in \Z} \ch_E(2^{j} \xi) &\le 2 
        \qquad\text{for a.e. }\xi\in\R,
    \end{align*}
the support $E$ contains a wavelet set. The idea was to show that the support of $\hat \psi$ partitions into sets of the form 
\[
G \cup \bigcup_{i} \left(F_i \cup (F_i + k_i) \cup 2^{j_i} F_i \cup 2^{j_i}(F_i + k_i)\right)
\]
with $j_i \ge 1$, $k_i\in\Z$, and $G = \{\xi : |\hat \psi(\xi) |= 1\}$. The existence of such partition implies that
\[
G \cup \bigcup_{i} F_i \cup 2^{j_i}(F_i + k_i)
\]
is a wavelet set.

Second, in \cite{BowRze}, the first and second authors showed that the support of the Fourier transform of an \textit{MRA} wavelet contains a wavelet set. Let $\psi$ be an MRA wavelet with associated scaling function $\phi$. 
 For background on these concepts, see for example \cite{HerWei96}. The authors showed specifically that the support $\hat \phi$ contains a scaling set whose associated wavelet set is contained in the support of $\hat \psi$. In particular, the authors showed that the support of the Fourier transform of an MRA wavelet contains a wavelet set \textit{associated with an MRA}. 

This leads to the following natural strong version of Problem \ref{mainprob}.

\begin{problem}\label{problemstrongdimension}
  Let $\psi$ be an orthonormal wavelet with the wavelet dimension function 
  \[
  D_\psi(\xi) = \sum_{j=1}^\infty \sum_{k\in \Z} |\hat\psi(2^j(\xi+k))|^2 \qquad \text{for a.e. }\xi\in\R. 
  \]
  Does the support of $\hat \psi$ contain a wavelet set whose associated MSF wavelet has the same dimension function as $\psi$? See \cite{BowRzeSpe01}, for example, for more information on wavelet dimension function.
\end{problem}

MRA wavelets are exactly those wavelets whose wavelet dimension function $D_\psi$ is identically 1. Hence, the result \cite[Theorem 2.8]{BowRze} shows that Problem \ref{problemstrongdimension} has a positive solution when $D_\psi$ is identically 1. Problem \ref{problemstrongdimension} is open for all other dimension functions.

We end this section with two simple observations. The first observation is that it is possible to \textit{separately} find two sets which tile $\R$ by translations and dilations, respectively.

\begin{proposition}\label{transanddiltileprop}
    Let $f$ be a non-negative, measurable function that tiles by translations. The support of $f$ contains a measurable set which tiles by translations. The same result is true with translations replaced by dilations.
\end{proposition}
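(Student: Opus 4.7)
The plan for the translation part is a straightforward measurable-selector construction on the fundamental domain $[0,1)$. Since $f \ge 0$ tiles by translations, the quantity $\sum_{k \in \Z} f(\xi+k)$ equals a constant $c>0$ for almost every $\xi$, so for a.e. $\xi \in [0,1)$ the set $A(\xi) = \{k \in \Z : f(\xi + k) > 0\}$ is nonempty. Enumerate $\Z$ as $\{k_1, k_2, \ldots\}$ (for instance, $0, 1, -1, 2, -2, \ldots$) and define
\[
B_n = \bigl\{\xi \in [0,1) : f(\xi + k_n) > 0 \text{ and } f(\xi + k_m) = 0 \text{ for every } m < n\bigr\}.
\]
Each $B_n$ is measurable, and $\{B_n\}_{n \ge 1}$ partitions a full-measure subset of $[0,1)$. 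Let $F = \bigcup_n (B_n + k_n)$. By construction $F$ is a measurable subset of $\supp f$, and for a.e. $\xi \in [0,1)$ exactly one of the translates $\xi + k$ with $k \in \Z$ lies in $F$, namely $k = k_n$ for the unique $n$ with $\xi \in B_n$. Since $\sum_{k \in \Z} \chi_F(\xi+k)$ is $1$-periodic, it equals $1$ for almost every $\xi \in \R$, so $F$ tiles by translations.

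For the dilation part, the plan is to reduce to the translation case via a logarithmic change of variables on $\R_+$ and $\R_-$ separately. On $\R_+$, consider the homeomorphism $\phi: \R \to \R_+$ given by $\phi(t) = 2^t$, and set $g(t) = f(2^t)$. Then $g$ is nonnegative and measurable, and the identity
\[
\sum_{j \in \Z} g(t + j) \;=\; \sum_{j \in \Z} f\bigl(2^{t+j}\bigr) \;=\; \sum_{j \in \Z} f\bigl(2^j \cdot 2^t\bigr)
\]
shows that $g$ tiles by translations whenever $f$ tiles by dilations. Applying the translation result to $g$ produces a measurable $F' \subset \supp g$ that tiles $\R$ by translations; its image $F_+ = \phi(F')$ is a measurable subset of $\supp f \cap \R_+$ that tiles $\R_+$ by dilations. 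Repeating the argument on $\R_-$ with $\phi(t) = -2^t$ gives $F_-$, and $F = F_+ \cup F_-$ is the desired subset of $\supp f$ tiling $\R$ by dilations (the null set $\{0\}$ is irrelevant).

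There is no substantial obstacle; the only care needed is to verify measurability of the pieces $B_n$ and to use a fixed enumeration of $\Z$ so that the first-hit rule $k(\xi) = k_n$ for $\xi \in B_n$ yields a genuine selector defined almost everywhere on $[0,1)$. The reduction in the dilation case is clean because $\phi$ is a measurable bijection conjugating the $\Z$-action by translation to the $\{2^j\}$-action by dilation, so no separate argument is needed.
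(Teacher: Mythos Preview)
Your proof is correct. The translation argument is essentially the same greedy selection the paper uses: the paper enumerates the translates $\{[0,1]+k\}$ and at each stage takes the part of $E$ in the current translate not already covered, while you work inside the single fundamental domain $[0,1)$ and, for each point, pick the first $k$ (in a fixed enumeration of $\Z$) with $f(\xi+k)>0$. These are dual descriptions of the same first-hit construction.

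The dilation part is where the two proofs diverge. The paper simply repeats the greedy argument with the dyadic annuli $\{2^j([-1,-1/2]\cup[1/2,1])\}$ in place of the unit intervals. You instead conjugate the problem: the diffeomorphism $t\mapsto 2^t$ (and its negative counterpart) turns dyadic dilations into integer translations, so the translation result applies directly to $g(t)=f(2^t)$. Your reduction is arguably cleaner because it avoids rewriting the construction, at the cost of checking that the change of variables respects measurability and null sets, which is immediate since the maps are smooth bijections. The paper's direct approach makes the dilation fundamental domain explicit, which ties in more visibly with the wavelet-set language used elsewhere. Both are short and valid.
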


\begin{proof}
    Let $E$ denote the support of $f$. Since $\ch_E \ge f$ \ a.e., we have 
    \[\sum_{k \in \Z} \ch_E(\xi + k) \ge 1 \qquad\text{a.e.}\]
    Thus, $\bigcup_{k \in \Z} (E + k) = \R$ up to a set of measure zero. 
    
    Let $\{F_n\}_{n \in \N}$ be an enumeration of the sets $\{[0, 1] + k: k \in \Z\}$. Define $G_0 = \emptyset$. For each $n \ge 1$, define 
    \[
    G_{n + 1} = G_n \cup \left(\left(E \cap F_{n + 1}\right) \setminus \bigcup_{k \in \Z} (G_n + k) \right)
    \]
    and see that $G = \bigcup_{n = 1}^\infty G_n\subset E$ tiles by translations.

    The proof for dilations follows similarly by letting $\{F_n\}_{n \in \N}$ be an enumeration of $\{2^j W: j \in \Z\}$, where $W = [-1, -1/2] \cup [1/2, 1]$.
\end{proof}

\begin{proposition}\label{integralvalue}
    Let $\psi$ be a wavelet. Then
    \begin{align}
        \int_\R \left |\hat \psi(\xi)\right |^2\, d\xi &= 1, \label{int:1} \\
        \int_\R \left |\hat \psi(\xi)\right |^2\, \frac{d\xi}{|\xi|} &= \log 4. \label{int:2}
    \end{align}
\end{proposition}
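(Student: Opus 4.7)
The first identity is a direct application of Plancherel's theorem: since $\psi$ is a wavelet, it is in particular a unit vector in $L^2(\R)$, so $\|\hat\psi\|_2 = \|\psi\|_2 = 1$, which gives \eqref{int:1}.

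For \eqref{int:2}, the plan is to exploit the dilation tiling equation \eqref{eq:2}, which says $\sum_{j\in\Z} |\hat\psi(2^j\xi)|^2 = 1$ for a.e. $\xi\in\R$. I would first split the integral into its positive and negative halves:
\[
\int_{\R}|\hat\psi(\xi)|^2\,\frac{d\xi}{|\xi|} = \int_0^\infty |\hat\psi(\xi)|^2\,\frac{d\xi}{\xi} + \int_0^\infty |\hat\psi(-\xi)|^2\,\frac{d\xi}{\xi}.
\]

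Next, on each half I would perform the logarithmic substitution $\xi = 2^t$, so that $d\xi/\xi = (\log 2)\,dt$. Then I would decompose the real line into fundamental domains of the $\Z$-action $t\mapsto t+1$, writing $t = j+s$ with $j\in\Z$ and $s\in[0,1)$, and apply Tonelli's theorem to interchange the sum and the integral. This yields, for instance,
\[
\int_0^\infty |\hat\psi(\xi)|^2\,\frac{d\xi}{\xi} = (\log 2)\int_0^1\sum_{j\in\Z}|\hat\psi(2^{j+s})|^2\,ds = (\log 2)\int_0^1 1\,ds = \log 2,
\]
where the middle equality uses \eqref{eq:2} applied at $\xi = 2^s > 0$. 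The negative half is handled identically, since \eqref{eq:2} holds for a.e. $\xi\in\R$ (in particular for a.e. $\xi < 0$, noting that $2^j$ is positive so it preserves sign). Adding the two contributions gives $2\log 2 = \log 4$, as required.

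There is no real obstacle here; the proof is essentially unwinding the tiling identity \eqref{eq:2} via a change of variables. The only mild subtlety is justifying the interchange of sum and integral, but since the integrand is non-negative this is immediate from Tonelli's theorem.
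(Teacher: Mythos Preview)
Your proof is correct and carries out essentially what the paper's one-line proof (``Use \eqref{eq:1} and \eqref{eq:2}'') intends. The only cosmetic difference is that for \eqref{int:1} you invoke Plancherel directly, whereas the paper's hint points to integrating the translation identity \eqref{eq:1} over $[0,1]$; both are immediate, and your treatment of \eqref{int:2} via the logarithmic substitution is exactly the unwinding of \eqref{eq:2} that the paper has in mind.
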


\begin{proof}
    Use \eqref{eq:1} and \eqref{eq:2}.
\end{proof}

The second observation is that it is too much to hope that the support of a function that satisfies the conclusion of Proposition \ref{integralvalue} will always contain a wavelet set.

\begin{example}\label{ex:toostrong}
    Let $F =\left[-1/3, -1/6\right] \cup \left[1/3, 1/2\right] \cup \left[2, 8/3\right]$. It is easy to see that $F$ tiles by dilations, but $F$ does not tile by translations. Hence, neither $F$ nor any of its subsets is a wavelet set. That is, $F$ does not contain a wavelet set. Moreover, 
        \begin{align*}
        \int_\R \left |\ch_F(\xi)\right |^2\, d\xi &= 1,\\
        \int_\R \left |\ch_F (\xi)\right |^2\, \frac{d\xi}{|\xi|} &= \log 4.
    \end{align*}
 This implies that $|\hat \psi| \le 1$, \eqref{int:1}, and \eqref{int:2} do not together imply that the support of $\hat \psi$ contains a wavelet set. 
\end{example}

\section{Non-measurable Larson's problem}

In this section, we leverage facts about doubly-stochastic matrices to prove that if $f$ is a nonnegative function that tiles by translations and dilations, then its support contains a possibly non-measurable set $W$ that tiles by translations and dilations. 

Throughout this section, $J$ will denote an index set of the form $\{1, \ldots, N\}$ for some $N$ or $\N$. A square matrix $A = \left(a_{ij}\right)_{i, j \in J}$ with nonnegative entries is said to be \textit{doubly stochastic} if for every $k \in J$
\[
    \sum_{j \in J} a_{kj} =1 \qquad\text{and}\qquad
    \sum_{i \in J} a_{ik} =1.
\]
A diagonal of a square matrix
is a set of entries including exactly one from each row and one from each column. 
We will make essential use of the following fact due to Isbell \cite{Isb55, Isb62}. 

\begin{theorem}\label{birkhoff}
    Let $A = \left(a_{ij}\right)_{i, j \in J}$ be a nonnegative, doubly stochastic matrix. Then, there is a positive diagonal of $A$. That is, there is a permutation $\sigma$ of $J$ such that $a_{i\sigma(i)} > 0$ for all $i\in J$.
\end{theorem}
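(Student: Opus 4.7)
The plan is to reformulate the theorem as a perfect matching problem in a bipartite graph. Let $G$ be the bipartite graph with vertex parts $J$ (rows) and $J$ (columns), and with an edge between $i$ and $j$ precisely when $a_{ij} > 0$. A positive diagonal of $A$ corresponds exactly to a perfect matching of $G$, i.e., a bijection $\sigma: J \to J$ such that $(i, \sigma(i))$ is an edge for every $i$. The strategy is to verify Hall's marriage condition and then apply a suitable matching theorem, handling the finite and countably infinite cases separately.

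The first, essentially computational, step is to extract Hall's condition from double stochasticity. Given any finite $S \subset J$ viewed as a set of rows, let $N(S) = \{j \in J : a_{ij} > 0 \text{ for some } i \in S\}$. Then
\[
|S| = \sum_{i \in S} \sum_{j \in J} a_{ij} = \sum_{j \in N(S)} \sum_{i \in S} a_{ij} \leq \sum_{j \in N(S)} \sum_{i \in J} a_{ij} = |N(S)|,
\]
where the first equality uses that rows sum to $1$ and the last that columns sum to $1$. Thus $|N(S)| \geq |S|$ on the row side, and the symmetric inequality holds on the column side.

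In the finite case $J = \{1, \ldots, N\}$, this is exactly the hypothesis of Hall's marriage theorem, and the desired permutation $\sigma$ follows immediately. (Equivalently, one may invoke Birkhoff--von Neumann: $A$ is a convex combination of permutation matrices, and any permutation appearing with positive coefficient serves as $\sigma$.) In the countably infinite case, Hall's condition alone is known not to guarantee a perfect matching of a bipartite graph, even when it holds symmetrically on both sides. My plan is to use Hall's theorem on each side to produce an injection $f: J \to J$ saturating the rows through edges of $G$ and an injection $g: J \to J$ saturating the columns, and then splice them into a single bijection via a Cantor--Bernstein-type argument adapted to bipartite matchings. A safer fallback is a compactness argument through K\"onig's lemma on a finitely branching tree of finite partial matchings, using the fact that double stochasticity supplies equalities (not just the Hall inequality) to restrict the branching after suitable truncation.

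The main obstacle is the infinite case, where one must produce a \emph{bijection} and not merely an injection; this is precisely where Isbell's innovation is essential. The delicate point is that the qualitative Hall condition must be supplemented by the quantitative equality $|S| = \sum_{i \in S, j \in J} a_{ij}$ coming from row sums equal to $1$, which rigidly constrains the support of $A$ and is ultimately what forces either the required surjectivity in the Cantor--Bernstein step or the finite branching needed in the compactness argument.
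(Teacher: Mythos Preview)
The paper does not prove this theorem at all; it is quoted as a result of Isbell and used as a black box in the proof of Theorem~\ref{dar}. So there is no ``paper's proof'' to compare against, and your proposal must stand on its own.

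Your finite case is fine: the Hall computation is correct, and either Hall's marriage theorem or Birkhoff--von~Neumann finishes immediately.

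The infinite case has a genuine gap. You write that you will ``use Hall's theorem on each side to produce an injection $f:J\to J$ saturating the rows \dots\ and an injection $g:J\to J$ saturating the columns,'' and then splice them via a Cantor--Bernstein argument. The splicing step is fine, but the first step is not: for countable bipartite graphs that are not locally finite, Hall's condition does \emph{not} guarantee a matching saturating one side. The standard counterexample (vertex $a_0$ joined to every $b_j$, and $a_i$ joined only to $b_{i-1}$ for $i\ge 1$) satisfies Hall's condition on both sides yet has no perfect matching and no matching saturating the $a$-side. That particular graph cannot arise from a doubly stochastic matrix---forcing the row and column sums to equal $1$ collapses it---but this only shows that double stochasticity is doing real work beyond the bare Hall inequality; it does not supply the missing argument. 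Your K\"onig's-lemma fallback has the same defect: rows of a doubly stochastic matrix can have infinitely many positive entries, so the tree of partial matchings need not be finitely branching.

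Your final paragraph correctly locates the difficulty and asserts that the row/column \emph{equalities} ``force either the required surjectivity \dots\ or the finite branching,'' but this is a statement of intent, not a proof. That passage is exactly the content of Isbell's papers, and it is not routine. As written, the infinite case is unproved; to complete it you must either cite Isbell (as the paper does) or supply an explicit mechanism by which the column-sum constraint blocks the Hall-type counterexamples.
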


In the arguments provided below it is convenient to concentrate on irrational $\xi\in\br$. Thus, a set $C\subset\br$ described below is provided as a subset of $\br\setminus\bq$.
\begin{proposition}\label{propC}
 If $f:\br\to\br$ is non-negative and
\begin{equation} \label{p1}
\sum_{k\in\bz}f(\xi+k)= \sum_{j\in\bz}f(2^j\xi)=1 \qquad
\text{for a.e. }\xi\in\R,
\end{equation}
then there exists a set $C\subset\br\setminus \bq$ of full measure, such that \eqref{p1} holds for every 
$\xi\in C$ and $2^jC+q= C$ for all $j\in\bz$, $q\in\bq$.

\end{proposition}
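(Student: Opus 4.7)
The plan is to take the obvious full-measure set where both pointwise identities hold, remove $\bq$, and then shrink the result slightly so that it becomes genuinely invariant under the countable group generated by dyadic dilations and rational translations.

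First I would set
\[
A_1 = \Bigl\{\xi\in\br : \sum_{k\in\bz} f(\xi+k)=1\Bigr\}, \qquad A_2 = \Bigl\{\xi\in\br : \sum_{j\in\bz} f(2^j\xi)=1\Bigr\},
\]
which are measurable sets of full measure by hypothesis, and let $C_0 := (A_1\cap A_2)\setminus\bq$. Since $\bq$ is null, $C_0$ still has full measure and lies in $\br\setminus\bq$. Next, introduce the countable group
\[
G := \{\xi\mapsto 2^j\xi+q : j\in\bz,\ q\in\bq\}
\]
of affine bijections of $\br$ under composition. Every element of $G$ is bimeasurable and maps Lebesgue null sets to Lebesgue null sets, because rational translations preserve Lebesgue measure and dyadic dilations scale it by a positive constant. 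Finally, define
\[
C := \bigcap_{g\in G} g(C_0).
\]

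It then remains to verify the three required properties of $C$. Full measure follows because $\br\setminus g(C_0) = g(\br\setminus C_0)$ is null for every $g\in G$ and $G$ is countable, so $\br\setminus C$ is a countable union of null sets. Disjointness from $\bq$ is immediate from $C\subset C_0\subset\br\setminus\bq$. The invariance property $2^jC+q=C$ is the computation
\[
g_0(C) \;=\; \bigcap_{g\in G}(g_0\circ g)(C_0) \;=\; \bigcap_{g'\in G} g'(C_0) \;=\; C
\]
applied to $g_0(\xi)=2^j\xi+q\in G$. Moreover, both identities in \eqref{p1} hold pointwise on $C$ because $C\subset A_1\cap A_2$.

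There is no real obstacle here; the whole content of the proposition is that an almost-everywhere statement can be upgraded to a pointwise statement on a $G$-invariant full-measure set via the standard device of intersecting over a countable group of measure-quasi-invariant transformations. The only point needing any care is to notice that the group $G$ generated by dyadic dilations and rational translations is indeed countable, which is precisely why the conclusion can be stated for \emph{rational} translations rather than arbitrary real ones.
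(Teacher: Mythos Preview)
Your proof is correct and follows essentially the same approach as the paper: define the ``good'' set $C_0$ of irrationals where both identities hold pointwise, and then intersect its images under the countable family $\{\xi\mapsto 2^j\xi+q:j\in\bz,\ q\in\bq\}$ to obtain an invariant full-measure subset. The only cosmetic difference is that you phrase the construction in terms of the affine group $G$ and its action, whereas the paper writes out $C=\bigcap_{j\in\bz,\,q\in\bq}\bigl(2^j(F'\cap G')+q\bigr)$ directly; the resulting set and the verification steps are the same.
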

\begin{proof}
By \eqref{p1} there is a set $F\subset\br$ of full measure such  that $\sum_{k\in\bz}f(\xi+k)=1$ for all $\xi\in F$
and a  set $G\subset\br$ of full measure such  that $\sum_{j\in\bz}f(2^j\xi)=1$ for all $\xi\in G$. Therefore, \eqref{p1}
holds for all $\xi\in F\cap G$. Moreover, $F\cap G$ has full measure, since $m((F\cap G)^c)=m(F^c\cup G^c)\le
m(F^c)+m(G^c)=0$.

Set $F'=F\cap(\br\setminus \bq)$ and $G'=G\cap(\br\setminus \bq)$ and 
consider
\[
C=\bigcap_{j\in\bz,q\in\bq}\Big(2^j(F'\cap G')+q\Big).
\]
Since $C$ is an intersection of a countable family of sets of full measure, an argument as above assures that $C$ is of full measure.

Clearly, if $\xi\in C$, then $\xi\in F'\cap G'$, so \eqref{p1} holds for such $\xi$.
Take any $j_0\in \Z$ and $q_0 \in \Q$. Then,
\[
2^{j_0}C+q_0=\bigcap_{j\in \Z, q\in \Q} (2^{j+j_0}(F' \cap G') + 2^{j_0}q+q_0) = C.
\]
Finally,  $C\subset\br\setminus \bq$, since $F'\cap G'\subset\br\setminus \bq$.
\end{proof}

\begin{theorem}\label{dar}
 If $f:\br\to\br$ is non-negative and
\[
\sum_{k\in\bz}f(\xi+k)= \sum_{j\in\bz}f(2^j\xi)=1 
\qquad\text{for a.e. }\xi\in\br,
\]
then there exists a set $G\subset\supp(f)$, such that 
\begin{equation} \label{t2}
\sum_{k\in\bz}\ch_G(\xi+k)= \sum_{j\in\bz}\ch_G(2^j\xi)=1 
\qquad\text{for a.e. }\xi\in\br.
\end{equation}
\end{theorem}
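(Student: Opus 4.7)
The idea is to set up, within each orbit of the group action generated by integer translations and dyadic dilations, a doubly stochastic matrix whose rows correspond to translation classes and columns correspond to dilation classes, and then extract the desired selector from Isbell's positive-diagonal theorem.

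First I would invoke Proposition \ref{propC} to obtain a full-measure set $C \subset \R \setminus \Q$ that is invariant under $\xi \mapsto 2^j\xi + q$ for all $j\in\Z$, $q\in\Q$, and on which both pointwise tiling identities hold. Restricting attention to $C$ circumvents the a.e.\ issue and, crucially, guarantees that all points encountered are irrational. I would then decompose $C$ into orbits $\mathcal{O}$ under the group generated by $T:\xi \mapsto \xi+1$ and $D:\xi \mapsto 2\xi$; each orbit is countable.

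Fix one orbit $\mathcal{O}$. Let $\mathcal{R}$ be the collection of $\Z$-translation classes $\xi + \Z$ meeting $\mathcal{O}$, and $\mathcal{C}$ the collection of $2^\Z$-dilation classes $2^\Z\xi$ meeting $\mathcal{O}$; both are countably infinite. The key elementary observation is that for $T \in \mathcal{R}$ and $D \in \mathcal{C}$, the intersection $T\cap D$ is either empty or a single point: an equality $2^j\beta = \alpha + k$ with two distinct $j$'s would force $\beta$ rational. Define the $\mathcal{R}\times \mathcal{C}$ matrix
\[
a_{T,D} = \begin{cases} f(x) & \text{if } T\cap D = \{x\}, \\ 0 & \text{otherwise.} \end{cases}
\]
Row sums are $\sum_{D}a_{T,D}=\sum_{k\in\Z}f(\alpha+k)=1$ for any $\alpha\in T\cap\mathcal{O}$, and column sums are $\sum_{T}a_{T,D}=\sum_{j\in\Z}f(2^j\beta)=1$ for any $\beta\in D\cap\mathcal{O}$, because $\mathcal{O}\subset C$ and the tiling identities hold pointwise on $C$. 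After enumerating $\mathcal{R}$ and $\mathcal{C}$ by $\N$, Theorem \ref{birkhoff} yields a permutation $\sigma_{\mathcal{O}}:\mathcal{R}\to\mathcal{C}$ with $a_{T,\sigma_{\mathcal{O}}(T)}>0$ for every $T\in\mathcal{R}$. For each $T$ let $x_T$ be the unique point of $T\cap \sigma_{\mathcal{O}}(T)$, and set $G_{\mathcal{O}} := \{x_T : T\in\mathcal{R}\}$. Then $G_{\mathcal{O}}\subset \supp(f)\cap\mathcal{O}$ hits each translation class meeting $\mathcal{O}$ exactly once and each dilation class meeting $\mathcal{O}$ exactly once.

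Finally, define $G = \bigcup_{\mathcal{O}} G_{\mathcal{O}}$, where the union runs over all orbits of $C$ (invoking the axiom of choice to select one $\sigma_{\mathcal{O}}$ per orbit). For every $\xi \in C$, its translation class lies in one orbit and contains exactly one element of $G$, so $\sum_{k\in\Z}\ch_G(\xi+k)=1$; the same reasoning with the dilation class gives $\sum_{j\in\Z}\ch_G(2^j\xi)=1$. Since $C$ has full measure, \eqref{t2} holds a.e. The set $G$ will generally be non-measurable, which is consistent with the statement and with the non-measurable nature of the result. The main obstacle I anticipate is a conceptual rather than technical one: cleanly verifying that the row/column sums of the orbit matrix genuinely correspond to the hypothesized tiling sums, and that the intersection is always a single point, both of which rely essentially on restricting to the irrational, invariant set $C$ produced by Proposition \ref{propC}.
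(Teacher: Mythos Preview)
Your proposal is correct and follows essentially the same route as the paper: restrict to the invariant irrational full-measure set $C$ from Proposition~\ref{propC}, decompose into orbits, build on each orbit a doubly stochastic matrix indexed by translation classes versus dilation classes, apply Isbell's positive-diagonal theorem, and take the union of the resulting selectors. The only cosmetic difference is that the paper takes orbits under $\xi\mapsto 2^j\xi+q$ with $q\in\bq$ and parameterizes the matrix explicitly via $q_m$ and $j_n$, whereas you use the (finer) orbits of the group generated by $T$ and $D$ and phrase the matrix entries directly in terms of translation and dilation classes; both choices lead to the same doubly stochastic structure and the same conclusion.
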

\begin{proof}
Let $C$ be the set provided in  Proposition~\ref{propC}.
Consider an equivalence relation on $C$, given as $\xi_1\sim \xi_2$ $\iff$ there exist $j\in\bz$ and $q\in\bq$ such that 
$\xi_1=2^j\xi_2+q$. Since $2^jC+q= C$ for all $j\in\bz$, $q\in\bq$, the equivalence class of $\xi\in C$ is $[\xi]=\{2^j\xi+q: j\in\bz,q\in\bq\}=\{2^j(\xi+q): j\in\bz,q\in\bq\}$.

Let $Z$ be a set that contains exactly one representative from each equivalence class. Fix $\xi\in Z$.
Let $d$ be the dilation projection and let $\tau$ be the translation projection from $\br$ to 
$W=(-1,-\frac12]\cup(\frac12,1]$. That is, for any subset $V \subset \R$, we define
\[
d(V) = \bigcup_{j\in \Z}2^jV \cap W \qquad\text{and}\qquad\tau(V)=\bigcup_{k\in\Z}(V+k)\cap W.
\]

 Since the set $[\xi]$ is countable, the sets $X:=X(\xi)=d([\xi])$ and $Y:=Y(\xi)=\tau([\xi])$ can be written as $X=\{d_m:m\in\bn\}$ and $Y=\{t_n:n\in\bn\}$ with $d_m:=d_m(\xi)$ and $t_n:=t_n(\xi)$. Indeed, if $d(\xi+q)=d(\xi+q')$
for some $q,q'\in\bq$, then $2^j(\xi+q)=\xi+q'$ for some $j\in \bz$. Since $\xi$ is irrational one gets that $j=0$ and $q=q'$. Therefore, the set $X$ is infinite. Similarly, if $\tau(2^j\xi)=\tau(2^{j'}\xi)$ for some $j,j'\in\bz$, then
$2^j\xi+k=2^{j'}\xi$ for some $k\in\bz$. Since $\xi$ is irrational one gets that $j=j'$, and therefore, $Y$ is infinite.  

Assume that for a given pair $(d_m,t_n)\in X\times Y$ there is $j(m,n)\in\bz$ and $q(m,n)\in\bq$ such that
\[
d\big(2^{j(m,n)}(\xi+q(m,n))\big)=d_m \quad\text{ and }\quad \tau\big(2^{j(m,n)}(\xi+q(m,n))\big)=t_n.
\] 

Note that if $d(2^{j}(\xi+q))=d(2^{j'}(\xi+q'))$ for some $j,j'\in\bz$ and $q,q'\in\bq$, then
$2^{j}(\xi+q)=2^{j'+l}(\xi+q')$ for some $l\in\bz$. Thus, $j=j'+l$ since $\xi$ is irrational, and therefore, $q=q'$. This implies, that $q(m,n)$ given above does not depend on $n$, so $q(m,n)=q_m$.

Similarly,  if $\tau(2^{j}(\xi+q))=\tau(2^{j'}(\xi+q'))$ for some $j,j'\in\bz$ and $q,q'\in\bq$, then
$2^{j}(\xi+q)=2^{j'}(\xi+q')+k$ for some $k\in\bz$. Thus, $j=j'$ since $\xi$ is irrational. This implies, that $j(m,n)$ given above does not depend on $m$, so $j(m,n)=j_n$.

In particular, for each pair $(m,n)\in\bn\times\bn$ there is at most one choice of $j_n\in\bz$ and $q_m\in\bq$, such that
\begin{equation}\label{matr}
d\big(2^{j_n}(\xi+q_m)\big)=d_m \quad\text{ and }\quad \tau\big(2^{j_n}(\xi+q_m)\big)=t_n.
\end{equation}

For each $n\in\bn$ one has that $t_n\in Y=\tau([\xi])$. Clearly, $[\xi]=\{2^j(\xi+q): j\in\bz,q\in\bq\}$. Thus, $t_n=\tau(2^j(\xi+q))$ for some $j\in\bz$ and $q\in\bq$. By \eqref{matr} this means that
$j=j_n$ and $q\in\{q_m:m\in\bn\}$ with the values of $q$ depending on the value of $d(2^j(\xi+q))\in X=\{d_m:m\in\bn\}$. 
This leads to a definition of a non-empty set
\[
M(n)=\{ m\in\bn: d\big(2^{j_n}(\xi+q_{m})\big)=d_m \text{ and }  \tau\big(2^{j_n}(\xi+q_m)\big)=t_n\}.
\]

Similarly, for each $m\in\bn$ one has that $d_m\in X= d([\xi])$. Thus, $d_m=d(2^j(\xi+q))$ for some $j\in\bz$ and $q\in\bq$. By \eqref{matr} this means that
$q=q_m$ and $j\in\{j_n:n\in\bn\}$ with the values of $j$ depending on the value of $\tau(2^j(\xi+q))\in Y=\{t_n:n\in\bn\}$. 
This leads to a definition of a non-empty set
\[
N(m)=\{ n\in\bn: \tau\big(2^{j_n}(\xi+q_{m})\big)=t_n \text{ and } d\big(2^{j_n}(\xi+q_{m})\big)=d_m \}.
\]

Moreover, $m\in M(n)$ iff \eqref{matr} holds, which is equivalent to $n\in N(m)$.

Define a matrix $A=(a_{mn})$ by
\[
a_{mn}=
\begin{cases}
f\big(2^{j_n}(\xi+q_m)\big)&\text{if \eqref{matr} holds,}\\
0& \text{otherwise.}
\end{cases}
\]
Note that the definitions provided for the sets $M(n)$ and $N(m)$ give that
\[
a_{mn}=
\begin{cases}
f\big(2^{j_n}(\xi+q_m)\big)& m\in M(n),\\
0& \text{otherwise,}
\end{cases}=\begin{cases}
f\big(2^{j_n}(\xi+q_m)\big)& n\in N(m),\\
0& \text{otherwise}.
\end{cases}
\]

By construction, $A$ is a doubly stochastic matrix. Indeed, for each $n\in\bn$ there is an $m_0\in M(n)$, thus for every $k\in\bz$ one has that 
$\tau\big(2^{j_n}(\xi+q_{m_0})+k\big)=t_n$ and $d\big(2^{j_n}(\xi+q_{m_0})+k\big)\in X$. On the other hand,
if $m_1\in M(n)$, then $\tau\big(2^{j_n}(\xi+q_{m_1})\big)=t_n$, so $2^{j_n}(\xi+q_{m_1})=2^{j_n}(\xi+q_{m_0})+k$ for some $k\in\bz$. Therefore, 
\[
\sum_{m=1}^\infty a_{mn}=\sum_{m\in M(n)}f\big(2^{j_n}(\xi+q_m)\big)=\sum_{k\in\bz}
f\big(2^{j_n}(\xi+q_{m_0})+k\big)=1,
\]
by Proposition~\ref{propC}, since $\xi\in C$, so $2^{j_n}(\xi+q_{m_0})\in C$.

Similarly, for each $m\in\bn$ there exists an $n_0\in N(m)$, thus for every $j\in\bz$ one has that 
$d\big(2^{j_{n_0}+j}(\xi+q_{m})\big)=d_m$ and $\tau\big(2^{j_{n_0}+j}(\xi+q_{m})\big)\in Y$. On the other hand,
if $n_1\in N(m)$, then $d\big(2^{j_{n_1}}(\xi+q_{m})\big)=d_m$, so $2^{j_{n_1}}(\xi+q_{m})=2^{j_{n_0}+j}(\xi+q_{m})$ for some $j\in\bz$. Therefore, 
\[
\sum_{n=1}^\infty a_{mn}=\sum_{n\in N(m)}f\big(2^{j_n}(\xi+q_m)\big)=\sum_{j\in\bz}
f\big(2^{j_{n_0}+j}(\xi+q_{m})\big)=1,
\]
by Proposition~\ref{propC}, since $\xi\in C$, so $2^{j_{n_0}}(\xi+q_{m})\in C$.

Since $A$ is doubly stochastic, by Theorem~\ref{birkhoff} there is a bijection $\sigma:\bn\to\bn$ such that $a_{m\sigma(m)}>0$ for every $m\in\bn$, and therefore, $f\big(2^{j_{\sigma(m)}}(\xi+q_{m})\big)>0$.

Let 
\[
R_\xi=\{2^{j_{\sigma(m)}}(\xi+q_{m}):m\in\bn\} \quad \text{and}\quad G=\bigcup_{\xi\in Z}R_\xi.
\]
Clearly, $G\subset\supp(f)$ and the following four steps are needed to establish that \eqref{t2} holds.

If $2^jR_{\xi_1}\cap R_{\xi_2}\ne\emptyset$ for some $\xi_1,\xi_2\in Z$ and $j\in\bz$, then
$2^{j_{\sigma(m_1)}+j}(\xi_1+q_{m_1})=2^{j_{\sigma(m_2)}}(\xi_2+q_{m_2})$ for some
$m_1,m_2\in\bn$. Thus, $\xi_2\in[\xi_1]$, so $\xi_2=\xi_1$ since $\xi_1,\xi_2\in Z$. The irrationality of $\xi_1$ implies that $j_{\sigma(m_1)}+j=j_{\sigma(m_2)}$, so $q_{m_1}=q_{m_2}$.  By \eqref{matr}, this leads to $d_{m_1}=d_{m_2}$, so $m_1=m_2$. Thus, $\sigma(m_1)=\sigma(m_2)$, which yields $j=0$. Therefore, $2^jG\cap G=\emptyset$ for $j\in\bz\setminus\{0\}$.

Similarly, if $(R_{\xi_1}+k)\cap R_{\xi_2}\ne\emptyset$ for some $\xi_1,\xi_2\in Z$ and $k\in\bz$, then
$2^{j_{\sigma(m_1)}}(\xi_1+q_{m_1})+k=2^{j_{\sigma(m_2)}}(\xi_2+q_{m_2})$ for some
$m_1,m_2\in\bn$. Thus, $\xi_2\in[\xi_1]$, so $\xi_2=\xi_1$. The irrationality of $\xi_1$ implies that $j_{\sigma(m_1)}=j_{\sigma(m_2)}$.  By \eqref{matr}, this leads to $t_{\sigma(m_1)}=t_{\sigma(m_2)}$, so
$\sigma(m_1)=\sigma(m_2)$. This implies that $m_1=m_2$, which yields $k=0$. Therefore, $(G+k)\cap G=\emptyset$ for $k\in\bz\setminus\{0\}$.

To see that $\bigcup_{j\in\bz}2^jG=\br$ modulo null sets, it suffices to show that $d(G)=W$ modulo null sets. Toward this end note that $d(R_\xi)=X(\xi)=d([\xi])$ for $\xi\in Z$, since $d\big(2^{j_{\sigma(m)}}(\xi+q_{m})\big)=d_m(\xi)$    by \eqref{matr}. Thus,
\[
d(G)=\bigcup_{\xi\in Z}d(R_\xi)=\bigcup_{\xi\in Z}d([\xi])=d\bigg(\bigcup_{\xi\in Z}[\xi]\bigg)=d(C).
\]
Since the set $C$ is of full measure, one gets that $d(G)=d(C)=W$ modulo null sets.

Similarly, to see that $\bigcup_{k\in\bz}(G+k)=\br$ modulo null sets, it suffices to show that $\tau(G)=W$ modulo null sets. Toward this end note that $\tau(R_\xi)=Y(\xi)=\tau([\xi])$ for $\xi\in Z$, since $\tau\big(2^{j_{\sigma(m)}}(\xi+q_{m})\big)=t_{\sigma(m)}(\xi)$    by \eqref{matr}, and $\sigma$ is a permutation of $\bn$. Thus,
\[
\tau(G)=\bigcup_{\xi\in Z}\tau(R_\xi)=\bigcup_{\xi\in Z}\tau([\xi])=\tau\bigg(\bigcup_{\xi\in Z}[\xi]\bigg)=\tau(C).
\]
Since the set $C$ is of full measure, one gets that $\tau(G)=\tau(C)=W$ modulo null sets.

The four steps provided above lead to the conclusion that \eqref{t2} holds for a.e. $\xi\in\br$. 
\end{proof}

\begin{remark}
In Theorem~\ref{dar} measurability of $f$ and $G$ is not addressed, $f$ is defined in each point of $\br$, $\supp(f)=\{\xi\in\br:f(\xi)\ne0\}$ and $G$ is possibly non-measurable.
\end{remark}

\begin{example}
Let $F \subset [1/3, 2/3]$ be a non-measurable set. Define $G = [1/3, 2/3] \setminus F$. Then, $F \cup 2G \cup (G - 1) \cup 2(F - 1)$ is a non-measurable set which tiles by translations and dilations.
\end{example}

Note that if a nonnegative, measurable function $f$ tiles by translations and dilations, then $\int f \, dm = 1$ and $\int f\, d\nu = 1$, where $m$ is the Lebesgue measure and the measure $\nu$ is defined on the Lebesgue measurable sets $V$ by $\nu(V) = \int_V \frac{dx}{|x| \log 4}$. We showed in Example \ref{ex:toostrong} that the two integral conditions do not imply that the support of $f$ contains a wavelet set. However, if one were trying to find a negative solution to Problem \ref{mainprob} or Problem \ref{stronglarson} in light of Theorem \ref{dar}, one might hope to find an example of a function which tiles by both translations and dilations whose support does not contain sets of the correct measures. That is, we are interested in knowing whether every nonnegative, measurable $f$ that tiles by translations and dilations has support that contains a set $F$ with $m(F) = 1$ and $\nu(F) = 1$. Theorem \ref{marcin_meas} provides the expected positive answer.

\begin{lemma}\label{lemma:ge}
If $U$ and $V$ are measurable subsets of $\R$ such that $m(U) = m(V) = c_1$ and $\nu(U) \le c_2 \le \nu(V)$, then there exists a set $W \subset U \cup V$ such that $m(W) = c_1$ and $\nu(W) = c_2$, where $0 \le c_i < \infty$. 
\end{lemma}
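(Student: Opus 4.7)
The plan is to apply the Lyapunov convexity theorem to the pair of measures $m$ and $\nu$ on a suitable subset of $U\cup V$. Since Lyapunov's theorem requires finite measures, the first step is to reduce to the case $\nu(V)<\infty$. If $\nu(V)=\infty$, I would replace $V$ by the truncation $V_r:=V\cap\{e^{-r}\le|x|\le e^r\}$; then $V_r\uparrow V$ modulo the origin, so $\nu(V_r)\to\nu(V)=\infty$ by continuity of measure from below, while $\nu(V_r)\le 2r/\log 2<\infty$ for each finite $r$. Choosing $r$ large enough that $\nu(V_r)\ge 1$, the set $V_r\subset V$ satisfies $1\le\nu(V_r)<\infty$ and $m(V_r)\le m(V)=1$; if $\nu(V)<\infty$ initially, simply set $V_r=V$. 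In either case it suffices to produce $W\subset U\cup V_r\subset U\cup V$ with $m(W)=\nu(W)=1$.

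With $m$ and $\nu$ now both finite and non-atomic on $U\cup V_r$, Lyapunov's convexity theorem asserts that the range $R:=\{(m(W),\nu(W)):W\subset U\cup V_r \text{ measurable}\}$ is a convex subset of $\R^2$. I would exhibit two points of $R$ at $m$-coordinate $1$ whose $\nu$-coordinates straddle $1$; convexity then places $(1,1)$ on the vertical segment between them, hence in $R$, which is exactly what is needed. Taking $W=U$ gives the first point $(1,\nu(U))\in R$ with $\nu(U)\le 1$. For the second, I would use non-atomicity of $m$ to pick $U'\subset U\setminus V_r$ with $m(U')=1-m(V_r)\ge 0$; this is possible because $m(U\setminus V_r)\ge m(U)-m(V_r)=1-m(V_r)$. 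Setting $W_0:=U'\cup V_r$ then yields $m(W_0)=m(U')+m(V_r)=1$ and $\nu(W_0)\ge\nu(V_r)\ge 1$, producing the desired second point $(1,\nu(W_0))\in R$ with $\nu$-coordinate at least $1$.

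The main obstacle is the truncation step: $\nu$ is $\sigma$-finite but in general not finite, and a direct application of Lyapunov's theorem to $U\cup V$ is not valid when $\nu(V)=\infty$. The explicit truncation $V_r=V\cap\{e^{-r}\le|x|\le e^r\}$ resolves this cleanly, and after it the remainder of the argument is essentially a one-dimensional intermediate value principle on the affine slice $\{m=1\}$ of the (convex) Lyapunov range.
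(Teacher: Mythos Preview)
Your proof is correct. The truncation to $V_r$ ensures both $m$ and $\nu$ are finite non-atomic measures on $U\cup V_r$, so Lyapunov's convexity theorem applies, and your two sample points $U$ and $W_0=U'\cup V_r$ (with $U'\subset U\setminus V_r$ of the right $m$-measure) do straddle the target on the slice $\{m=1\}$.

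The paper takes a different, more elementary route. After an analogous reduction to $\nu(V)<\infty$, it builds an explicit one-parameter family $W_t=U_t\cup V_t$ with $U_t=U\cap[-t,t]$ and $V_t=V\setminus[-g(t),g(t)]$, where $g(t)$ is chosen so that $m(W_t)=1$; it then observes that $\nu(W_t)$ moves continuously from $\nu(V)$ to $\nu(U)$ as $t$ varies, and invokes the Intermediate Value Theorem. So the paper replaces your appeal to Lyapunov by a hands-on interpolation between $U$ and $V$ along a concrete path. Your argument is shorter and more conceptual, and would adapt immediately to any pair of finite non-atomic measures; the paper's argument avoids the external black box but depends on the specific radial structure of $\nu$ (the density $1/(|x|\log 4)$ is monotone in $|x|$) to set up the path $W_t$.
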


\begin{proof}
Without loss of generality, we can assume that $\nu(V) < \infty$. In fact, if $\nu(V) = \infty$, then for every $\epsilon > 0$, $\nu(V \cap [-\epsilon, \epsilon]) = \infty$. Choose $\epsilon > 0$ such that $\nu(V \setminus [-\epsilon, \epsilon]) > 1$ and let $V_1 = V \setminus [-\epsilon, \epsilon].$ Note that
\[
m(U \setminus V_1) \ge m(V) -m(V_1) = m(V \cap [-\epsilon,\epsilon]).
\]
Hence, there exists a subset $V_2$ of $U \setminus V_1$ of measure $m(V \cap [-\epsilon, \epsilon])$. Once the lemma is proven for $U$ and $V_1 \cup V_2$, then it also holds for the original $U$ and $V$.

Next, we show that it suffices to prove the lemma when $U$ and $V$ are disjoint. Let $G = U \cap V$, $U^\prime = U \setminus G$, and $V^\prime = V \setminus G$. Note that \[
m(U')=m(V')=c_1-m(G)\quad\text{and}\quad \nu(U')\le c_2 - \nu(G) \le \nu(V').
\]
If we can find $W^\prime \subset U^\prime \cup V^\prime$ such that $m(W^\prime) = m(U^\prime) = m(V^\prime)$ and $\nu(W^\prime) = c_2 - \nu(G)$, then $W = W^\prime \cup G$ satisfies $m(W) = c_1$ and $\nu(W) = c_2$.

Assume $U$ and $V$ are disjoint. Consider $U_t = U \cap [-t, t]$. Let $t_0 = \sup\{t \ge 0: m(U_t) = 0\}$. For each $t > t_0$, there exists $g(t) > 0$ such that $m(V\cap [-g(t), g(t)]) = m(U_t)$. Define $V_t = V\setminus [-g(t), g(t)]$ and let $W_t = U_t \cup V_t$. We have $m(W_t) = 1$ for all $t$, and $\nu(W_t) \to \nu(V)$ as $t\to t_0$ and $\nu(W_t) \to \nu(U)$ as $t \to \infty$.  Since the function $t\mapsto \nu(W_t)$ is continuous, the lemma follows from the Intermediate Value Theorem.
\end{proof}

\begin{theorem}\label{marcin_meas}
    Let $f$ be a measurable function on $\R$ such that $0 \le f \le 1$ almost everywhere and
 \[
\int_\R f\, dm = c_1 \qquad\text{and}\qquad
\int_\R f\, d\nu = c_2,
\]
    where $0 \le c_i < \infty$.
    
    Then, there exists a measurable set $U \subset E = \supp(f)$ such that $m(U) = c_1$ and $\nu(U) = c_2$.
\end{theorem}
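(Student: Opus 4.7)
The plan is to use a rearrangement-style argument to produce two extremal subsets of $E$ of $m$-measure $c_1$, one having $\nu$-measure at least $c_2$ and the other at most $c_2$, and then to interpolate between them along a one-parameter family via the intermediate value theorem.

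\textbf{Upper and lower brackets.} Since $f \le 1$, we have $m(E) \ge \int f\, dm = c_1$. By atomlessness of $m$, pick $r \ge 0$ so that $V := E \cap [-r, r]$ satisfies $m(V) = c_1$. To see $\nu(V) \ge c_2$, note that $\ch_V - f \ge 0$ on $[-r, r]$ (because $f \le 1 = \ch_V$ on $V$ and $f = 0 = \ch_V$ on $[-r, r] \setminus E$) while $\ch_V - f \le 0$ on $\{|x| > r\}$. Since $\int (\ch_V - f)\, dm = 0$, setting $A := \int_{[-r, r]} (\ch_V - f)\, dm = \int_{\{|x| > r\}} f\, dm$ and using that $\tfrac{1}{|x|\log 4} \ge \tfrac{1}{r\log 4}$ inside $[-r,r]$ and $\le \tfrac{1}{r\log 4}$ outside yields
\[
\nu(V) - c_2 = \int (\ch_V - f)\, d\nu \ge \frac{A}{r\log 4} - \frac{A}{r\log 4} = 0.
\]
A symmetric choice of $s \ge 0$ with $U := E \setminus [-s, s]$ of $m$-measure $c_1$ gives $\nu(U) \le c_2$ by the same computation.

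\textbf{Interpolation.} If $\nu(V) = c_2$ or $\nu(U) = c_2$, take $U = V$ or $U = U$, respectively. Otherwise, for $a \in [0, s]$ I let $b(a) \in [a, \infty]$ be chosen so that $m(E \cap \{a \le |x| \le b(a)\}) = c_1$, and set $W_a := E \cap \{a \le |x| \le b(a)\}$. Such $b(a)$ exists by atomlessness of $m$ together with the fact that $m(E \cap \{|x| \ge a\}) \ge c_1$ for $a \in [0, s]$. By construction $m(W_a) = c_1$, and $W_0 = V$, $W_s = U$. Because $a \le a'$ forces $b(a) \ge b(a')$ and $W_a \supseteq W_{a'}$, the map $a \mapsto \nu(W_a)$ is non-increasing. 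On $(0, s]$ it is continuous (since $W_a$ lies in $\{|x| \ge a\} \cap \{|x| \le b(a)\}$, a set of finite $\nu$-measure) and monotone convergence gives $\nu(W_a) \uparrow \nu(V)$ as $a \to 0^+$, even if the limit is infinite. Since $\nu(V) > c_2 > \nu(U)$, the IVT yields $a^* \in (0, s)$ with $\nu(W_{a^*}) = c_2$, and $U := W_{a^*}$ is the desired set.

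\textbf{Main obstacle.} The key subtlety is the interpolation: when $\nu(V) = \infty$ (which can occur even under $c_2 < \infty$, e.g.\ if $E$ accumulates mass near the origin while $f$ vanishes there) a naive family always containing a neighborhood of $0$ would be discontinuous. My parametrization avoids this by excluding $\{|x| < a\}$ from $W_a$ for $a > 0$, so $\nu(W_a)$ is finite for every $a > 0$ and blows up to $\nu(V)$ only in the limit $a \to 0^+$. Verifying the monotone limits and continuity of $a \mapsto \nu(W_a)$ is the main technical step; one may also view this as a direct generalization of the one-parameter IVT argument used in the proof of Lemma \ref{lemma:ge}.
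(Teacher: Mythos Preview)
Your overall strategy---producing an ``inner'' subset $V=E\cap\{|x|\le r\}$ with $\nu(V)\ge c_2$, an ``outer'' subset $U=E\cap\{|x|>s\}$ with $\nu(U)\le c_2$, and then interpolating through annular pieces $W_a=E\cap\{a\le|x|\le b(a)\}$---is exactly the approach in the paper, just packaged without the auxiliary Lemma~\ref{lemma:ge}. The bracket computations for $V$ and $U$ are correct. There are, however, two genuine gaps.

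\textbf{The case $m(E)=\infty$.} You implicitly assume that some $s\ge0$ exists with $m(E\setminus[-s,s])=c_1$. If $m(E)=\infty$ then $m(E\setminus[-s,s])=\infty$ for every $s$, so no such $s$ exists and your interpolation interval $[0,s]$ is undefined. The paper handles this case separately: for large $x$ one picks any $E_x\subset E\cap\{|t|\ge x\}$ with $m(E_x)=c_1$, and observes $\nu(E_x)\le \tfrac{c_1}{x\log4}\to0$, so $\nu(E_x)\le c_2$ for $x$ large enough. Your annular family would in fact accommodate this---simply let $a$ range over $[0,\infty)$ and note $\nu(W_a)\le \tfrac{c_1}{a\log4}\to0$---but as written the argument does not cover it.

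\textbf{The monotonicity justification is wrong.} You assert that $a\le a'$ forces $b(a)\ge b(a')$ and hence $W_a\supseteq W_{a'}$. Both claims are false: as $a$ increases you remove mass on the inside, so $b(a)$ must \emph{increase} to compensate, and consequently the $W_a$ are not nested (each $W_a$ contains $E\cap\{a\le|x|<a'\}$ but misses $E\cap\{b(a)<|x|\le b(a')\}$). The conclusion that $a\mapsto\nu(W_a)$ is non-increasing is nonetheless true, but for a different reason: for $a<a'$ the set $W_a\setminus W_{a'}\subset\{|x|<a'\}$ and $W_{a'}\setminus W_a\subset\{|x|>b(a)\}$ have equal $m$-measure, and since one checks $b(a)\ge a'$ (or, if not, the two sets are disjoint and separated by $b(a)$), the $\nu$-density on the first dominates that on the second. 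Your continuity claim is likewise correct but needs this symmetric-difference estimate rather than the (false) nesting; ``$W_a$ lies in a set of finite $\nu$-measure'' alone does not give continuity.
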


\begin{proof}
    We start with some reductions. 
    If $c_1$ or $c_2$ are 0, then the result is obvious. Without loss of generality, the support of $f$ can be assumed to be contained in $[0, \infty)$. Let $F = \{x: f(x) = 1\}$ and note that $\int f = \int_F f + \int_{F^c} f$, so by replacing $c_1$ with $c_1 - \int_F f\, dm$ and $c_2$ with $c_2 - \int_F f \, d\nu$, we can assume that $0 \le f < 1$ almost everywhere, and $f$ is positive on a set of positive measure $E \subset [0, \infty)$.

    Choose $x_0$ such that 
    \[
      m\left(E \cap [0, x_0]\right) = \int_0^{x_0} \ch_E \, dm = c_1  
    \]
    and see that this implies
    \[
    \int_0^{x_0} \left(\ch_E - f\right) \, dm= \int_{x_0}^\infty f\, dm.
    \]
    Let $g(x) = \frac{1}{x \log 4}$ and see that
    \begin{align*}
        \int_0^{x_0} \left(\ch_E - f\right) \, d\nu &\ge \int_0^{x_0} \left(\ch_E - f\right) g(x_0) \, dm\\
        &= \int_{x_0}^\infty f g(x_0) \, dm \\
        &\ge \int_{x_0}^\infty f \, d\nu.
    \end{align*}
    Therefore, 
    \[
     \nu\left(E \cap [0, x_0]\right) \ge \int_0^\infty f\, d\nu = c_2.
    \]
    Therefore, $E \cap [0, x_0]$ satisfies the role of $V$ in Lemma \ref{lemma:ge}.

    Next, suppose that $m(E)<\infty$. Then, there exists $x_1$ such that 
    \[
      m\left(E \cap [x_1, \infty)\right) = \int_{x_1}^{\infty} \ch_E \, dm = c_1.
    \]
    As before, we see that 
        \[
    \int_{x_1}^{\infty} \left(\ch_E - f\right) \, dm= \int_{0}^{x_1} f\, dm.
    \]
    Note that
    \begin{align*}
        \int_{x_1}^{\infty} \left(\ch_E - f\right) \, d\nu &\le \int_{x_1}^{\infty} \left(\ch_E - f\right) g(x_1) \, dm\\
        &= \int_{0}^{x_1} f g(x_0) \, dm \\
        &\le \int_{0}^{x_1} f \, d\nu.
    \end{align*}
    Hence, 
    \[
     \nu\left(E \cap [x_1, \infty)\right) \le \int_0^\infty f\, d\nu = c_2.
    \]
    Therefore, $E \cap [x_1, \infty)$ satisfies the role of $U$ in Lemma \ref{lemma:ge}.

    Finally, if $m(E) = \infty$, then for $x \ge 0$, let $E_x$ be a set of Lebesgue measure $c_1$ such that $E_x \subset [x, \infty)$. Note that $\nu(E_x) < m(E_x) g(x) \to 0$ as $x\to \infty$, so there is an $x$ for which $\nu(E_x) \le c_2$. This $E_x$ can play the role of $U$ in Lemma \ref{lemma:ge}.

    Therefore, by Lemma \ref{lemma:ge}, the support of $f$ contains a set of Lebesgue measure $c_1$ and $\nu$ measure $c_2$, as desired.
\end{proof}

\bibliographystyle{plain}
\bibliography{bibliography.bib}

\end{document}